\newcounter{alphthm}
\theoremstyle{plain}
\newtheorem{theorem}{Theorem}[section]
\newtheorem{proposition}[theorem]{Proposition}
\newtheorem{cor}[theorem]{Corollary}
\theoremstyle{definition}
\newtheorem{defn}[theorem]{Definition}
\newtheorem{rem}[theorem]{Remark}
\newtheorem{example}[theorem]{Example}
\newcommand{\be}{\begin{equation}}
\newcommand{\ee}{\end{equation}}
\newcommand{\ben}{\begin{enumerate}}
\newcommand{\een}{\end{enumerate}}
\begin{document}
\title{On $\phi$-Recurrent Contact Metric Manifolds}
\author{E. Peyghan, H. Nasrabadi and A. Tayebi}
\maketitle

\maketitle
\begin{abstract}
In this paper, we prove that evry 3-dimensional manifold $M$ is a $\phi$-recurrent $N(k)$-contact metric manifold if and only if it is flat. Then we classify the $\phi$-recurrent contact metric manifolds of constant curvature. This implies that there exists no $\phi$-recurrent $N(k)$-contact metric manifold, which is neither symmetric nor locally $\phi$-symmetric.\\\\
{\bf {Keywords}}: Constant curvature, Locally $\phi$-symmetric, $N(k)$-contact metric manifold, $\phi$-recurrent.\footnote{ 2010 Mathematics subject Classification: 53C15, 53C40.}
\end{abstract}

\section{Introduction}
In 1872, S. Lie  introduced the notion of contact transformation as a geometric tool to study systems of differential equations \cite{A1}\cite{A2}\cite{B}. The theory of contact metric structures occupies one of the leading places in researches of modern differential geometry because of its several applications in mechanics, optics, phase space of a dynamical system, control theory  and in the theory of geometrical quantization \cite{G}.

On the other hand, the internal contents of the theory of contact metric structures are rich and have close substantial interactions with other parts of geometry. For example, Sasakian manifolds play important role in contact geometry. Indeed, the links between contact geometry and complex geometry are especially strong for Sasakian manifolds \cite{B}\cite{BC}\cite{C}.

It is shown that, the only locally symmetric Sasakian manifolds are
locally isometric to $S^{2n+1}(1)$ and that the only locally symmetric contact
metric manifolds are locally isometric to $S^{2n+1}(1)$ or to $E^{n+1}\times S^n(4)$ (see \cite{B}).
Certainly this can be regarded as saying that the idea of being locally
symmetric is too strong. For this reason, this notion has been weakend
by many geometers in different ways such as recurrent manifold by Walker \cite{W}, semi
symmetric manifold by Szab\'{o} \cite{Sz}, pseudo-symmetric manifold by Chaki \cite{Ch}, and
Deszcz \cite{De} and  weakly symmetric manifold by Tammasy and Binh \cite{TB}, and
Selberg \cite{Se}. As a weaker version of local symmetry, Takahashi introduced
the notion of a locally $\phi$-symmetric space \cite{T}. Generalizing the notion of local $\phi$-symmetry, De-Shaikh-Biswas  introduced the notion of $\phi$-recurrent Sasakian manifold \cite{DSB}. Then in \cite{DG}, De-Gazi studied $\phi$-recurrent $N(k)$-contact metric manifolds and generalized the results of \cite{DSB}.

In \cite{DG}, De-Gazi  proved that a 3-dimensional $\phi$-recurrent $N(k)$-contact metric manifold
is of constant curvature. Then they provided the existence of the
$\phi$-recurrent $N(k)$-contact metric manifold by means of an example which is neither
symmetric nor locally $\phi$-symmetric.

This paper is arranged as follows. In Section 2, we present some basic concepts of the contact metric manifolds, Sasakian manifolds, locally $\phi$-symmetric manifolds and $\phi$-recurrent $N(k)$-contact metric manifolds. In Section 3, we show that the example introduced by De-Gazi in \cite{DG} is not correct. Then, we prove that a 3-dimensional manifold $M$ is $\phi$-recurrent $N(k)$-contact metric manifold if and only if it is a flat manifold. In other words, we prove that there exists no 3-dimensional $\phi$-recurrent $N(k)$-contact metric manifold for $k\neq 0$. We also deduce that there exists no  3-dimensional $\phi$-recurrent $N(k)$-contact metric manifold which is neither symmetric nor locally $\phi$-symmetric. All results in this section show that Theorem 4.1 in \cite{DG} is not correct. In Section 4, we show that for $k\neq 0$, there is no $(2n+1)$-dimensional $\phi$-recurrent $N(k)$-contact metric manifold of contact curvature. We also prove that there is no $(2n+1)$-dimensional $\phi$-recurrent contact metric manifold of contact curvature for $n>1$. Finally we show that  only, the flat 3-dimensional manifolds are $\phi$-recurrent contact metric manifold of constant curvature.
\section{Contact Metric Manifolds}
In this section, we remark some fundamental materials about contact metric geometry. We refer to \cite{B}, \cite{L} for further details.

A $(2n+1)$-dimensional manifold $M^{2n+1}$ is said to be a contact manifold if it admits a global 1-form $\eta$ such that $\eta\wedge(d\eta)^n\neq0$, everywhere. Given a contact form $\eta$, there exists a unique vector field $\xi$, the characteristic vector field, which satisfies $\eta(\xi)=1$ and $d\eta(\xi, X)=0$ for any vector field $X$. It is well known that,  there exists an associated Riemannian metric $g$ and a (1, 1)-type tensor field $\phi$ such that the following hold
\begin{equation}\label{con}
(i)\ \eta(X)=g(X, \xi),\ \ \ (ii)\ d\eta(X, Y)=g(X, \phi Y),\ \ \ (iii)\ \phi^2X=-X+\eta(X)\xi,
\end{equation}
where $X$ and $Y$ are vector fields on $M$. By (\ref{con}), it follows that
\[
\phi\xi=0,\ \ \ \eta\circ\phi=0,\ \ \ g(\phi X, \phi Y)=g(X, Y)-\eta(X)\eta(Y).
\]
A Riemannian manifold $M$ equipped with the structure tensors $(\eta, \xi, \phi, g)$ satisfying (\ref{con}) is said to be a
\textit{contact metric manifold}.

Given a contact metric manifold $M$, we define a (1, 1)-tensor field $h$ by $h=\frac{1}{2}\pounds_\xi\phi$, where $\pounds$ denotes the Lie differentiation. Then the tensor $h$ is symmetric and satisfies
\begin{align}
(i)\ h\xi&=0,\ \ (ii)\ h\phi+\phi h=0,\ \ (iii)\ \nabla_X\xi=-\phi X-\phi hX,\label{Kill}\\
2(\nabla_{hX}\phi)Y&=-R(\xi, X)Y-\phi R(\xi, X)\phi Y+\phi R(\xi, \phi X)Y-R(\xi, \phi X)\phi Y\nonumber\\
&\ \ \ \ +2g(X+hX, Y)\xi-2\eta(Y)(X+hX)\nonumber,
\end{align}
where $\nabla$ is the Levi-Civita connection and $R$ is the Riemannian curvature tensor of $M$ defined
by following
\[
R(X, Y)Z=\nabla_X\nabla_YZ-\nabla_Y\nabla_XZ-\nabla_{[X, Y]}Z,\ \ \ \forall X, Y, Z\in\chi(M).
\]
For a contact metric manifold $M$,  one may defines naturally an almost complex structure $J$ on $M\times\mathbb{R}$ as follows
\[
J(X, f\frac{d}{dt})=(\phi X-f\xi, \eta(X)\frac{d}{dt}),
\]
where $X$ is a vector field tangent to $M$, $t$ the coordinate on $\mathbb{R}$ and $f$ a function on $M\times\mathbb{R}$. If the almost
complex structure $J$ is integrable, $M$ is said to be \textit{normal or Sasakian}. It is known that,  a contact metric
manifold $M$ is normal if and only if $M$ satisfies
\[
 [\phi, \phi]+2d\eta\otimes\xi=0,
\]
where $[\phi, \phi]$ is the Nijenhuis torsion of $\phi$. It is also well known that a contact metric manifold $M$ is
Sasakian if and only if
\[
R(X, Y)\xi=\eta(Y)X-\eta(X)Y,\ \ \ \forall X, Y\in\chi(M).
\]

The $k$-nullity distribution $N(k)$ of a Riemannian manifold $M$ is defined by
\[
N(k):p\longrightarrow N_p(k)=\Big\{Z\in T_pM: R(X, Y)Z=k[g(Y, Z)X-g(X, Z)Y]\Big\},
\]
where $k$ is a constant. If the characteristic vector field $\xi$ belongs to $N(k)$, then we call a contact metric manifold an $N(k)$-contact metric manifold. If $k=1$,
then $N(k)$-contact metric manifold is Sasakian and if $k=0$, then $N(k)$-contact metric manifold is locally isometric to the product $E^{n+1}\times S^n(4)$ for
$n>1$ and flat for $n=1$ (see \cite{B}). For a $N(k)$-contact metric manifold we have
\begin{equation}\label{esi3}
(i)\ R(X, Y)\xi=k[\eta(Y)X-\eta(X)Y],\ \ \  \ \ (ii)\ S(X, \xi)=2nk\eta(X),
\end{equation}
where $S$ is the Ricci tensor of Riemannian manifold $(M, g)$  (see \cite{DG}).

A contact metric manifold is said to be \textit{locally $\phi$-symmetric} if the relation
\[
\phi^2((\nabla_WR)(X, Y)Z)=0,
\]
holds for all vector fields $X$, $Y$, $Z$, $W$ orthogonal to $\xi$ \cite{BKS}. This notion was introduced for Sasakian manifolds by Takahashi \cite{T}.
\begin{defn}\label{hasan6}
A contact metric manifold is said to be $\phi$-recurrent if there exists a non-zero 1-form $A$ such that
\begin{equation}\label{rec}
\phi^2((\nabla_WR)(X, Y)Z)=A(W)R(X, Y)Z,
\end{equation}
for all vector fields $X, Y, Z, W$. 
\end{defn}
 In the above definition, $X, Y, Z, W$ are arbitrary vector fields and not necessarily orthogonal to $\xi$. This notion was introduced for Sasakian manifolds by De, Shaikh and Biswas \cite{DSB} and was introduced for $N(k)$-contact manifolds by De and Gazion \cite{DG}.
\begin{rem}\label{rem}
Flat manifolds are trivial examples of $\phi$-recurrent contact metric manifolds (locally $\phi$-symmetric manifolds), because for a flat manifold we have $R=0$ and $\nabla R=0$.
\end{rem}
\section{3-dimensional $\phi$-recurrent $N(k)$-contact metric  manifolds}
In \cite{DG}, De-Gazi presented the following example of $\phi$-recurrent $N(k)$-contact metric
manifold which is neither symmetric nor locally $\phi$-symmetric.
\begin{example}\label{Ex}
We take the 3-dimensional manifold $M=\{(x, y, z)| x\neq 0\}$, where $(x, y, z)$ are the
standard coordinates in $\mathbb{R}^3$. Let $E_1$, $E_2$, $E_3$ be linearly independent global frame on $M$ given by
\[
E_1=\frac{2}{x}\frac{\partial}{\partial y},\ \ E_2=2\frac{\partial}{\partial x}-\frac{4z}{x}\frac{\partial}{\partial y}+xy\frac{\partial}{\partial z},\ \ E_3=\frac{\partial}{\partial z}.
\]
Let $g$ be the Riemannian metric defined by $g(E_1, E_3)=g(E_2, E_3)=g(E_1, E_2)=0$ and $g(E_1, E_1)=g(E_2, E_2)=g(E_3, E_3)=0$. Let $\eta$ be the 1-form defined by $\eta(U)=g(U, E3)$ for any $U\in\chi(M)$. Let $\phi$ be
the (1, 1) tensor field defined by $\phi E_1=E_2$, $\phi E_2=-E_1$, $\phi E_3=0$. Then
using the linearity of $\phi$ and $g$ we have $\eta(E3)=1$, $\phi^2(U)=-U+\eta(U)E_3$ and $g(\phi U, \phi W)=g(U, W)-\eta(U)\eta(W)$ for any $U, W\in\chi(M)$. Moreover
$hE_1=-E_1$, $hE_2=E_2$, $hE_3=0$. Thus for $E_3=\xi$, $(\phi, \xi, \eta, g)$ defines
a contact metric structure on $M$. Hence we have $[E_1, E_2]=2E_3+\frac{2}{x}E_1$, $[E_1, E_3]=0$, $[E_2, E_3]=2E_1$.

The Riemannian connection $\nabla$ of the metric $g$ is given by
\begin{align*}
2g(\nabla_XY, Z)&=Xg(Y, Z)+Yg(Z, X)-Zg(X, Y)\\
&\ \ \ -g(X, [Y, Z])-g(Y, [X, Z])+g(Z, [X, Y]).
\end{align*}
Taking $E_3=\xi$ and using the above formula for Riemannian metric $g$, it can
be easily calculated that
\begin{align}\label{hasan}
\nabla_{E_1}E_3&=0,\ \ \nabla_{E_2}E_3=2E_1,\ \ \nabla_{E_3}E_3=0, \ \ \nabla_{E_1}E_2=\frac{2}{x}E_1\nonumber\\
\nabla_{E_2}E_1&=-2E_3,\ \ \nabla_{E_2}E_2=0,\ \ \nabla_{E_3}E_2=0,\ \ \nabla_{E_1}E_1=-\frac{2}{x}E_2.
\end{align}
From the above it can be easily seen that $(\phi, \xi, \eta, g)$ is a $N(k)$-contact metric
manifold with $k=-\frac{4}{x}\neq 0$.
\end{example}

Now, we are going to show that the above example is not correct. By using (\ref{hasan}),  we get
\begin{align}\label{hasan1}
R(E_1, E_2)E_3&=\nabla_{E_1}\nabla_{E_2}E_3-\nabla_{E_2}\nabla_{E_1}E_3-\nabla_{[E_1, E_2]}E_3\nonumber\\
&=2\nabla_{E_1}E_1-2\nabla_{E_3}E_3-\frac{2}{x}\nabla_{E_1}E_3\nonumber\\
&=-\frac{4}{x}E_2.
\end{align}
On the other hand,  since $\eta(E_1)=g(E_1, E_3)=0$ and $\eta(E_2)=g(E_2, E_3)=0$, then we have
\begin{equation}\label{hasan2}
R(E_1, E_2)E_3=k\big(\eta(E_2)E_1-\eta(E_1)E_2\big)=0,
\end{equation}
But (\ref{hasan1}) contradicts with (\ref{hasan2}).

\bigskip

In \cite{DG}, the authors proved the following theorem (See Theorem 4.1 in \cite{DG}).
\begin{theorem}{\rm (\cite{DG})}\label{3.2}
\emph{Every 3-dimensional $\phi$-recurrent $N(k)$-contact metric manifold is of constant curvature.}
\end{theorem}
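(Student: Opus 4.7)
The plan is to turn the $\phi$-recurrence (\ref{rec}) into a statement about the Ricci tensor, and then to invoke the fact that in dimension three, being Einstein implies constant sectional curvature. Applying $\phi^2$ to both sides of (\ref{rec}) and using (\ref{con})(iii), one gets
\begin{equation*}
(\nabla_W R)(X,Y)Z \;=\; -A(W)\,R(X,Y)Z \;+\; \eta\bigl((\nabla_W R)(X,Y)Z\bigr)\,\xi.
\end{equation*}
Taking $g(\,\cdot\,,e_i)$ with $X=e_i$ and summing over a local orthonormal frame $\{e_i\}_{i=1}^{3}$, and using $\sum_i\eta(e_i)\,e_i=\xi$, the left-hand side contracts to $(\nabla_W S)(Y,Z)$ and one obtains
\begin{equation*}
(\nabla_W S)(Y,Z) \;=\; -A(W)\,S(Y,Z) \;+\; \eta\bigl((\nabla_W R)(\xi,Y)Z\bigr).
\end{equation*}

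Next I would evaluate the correction term. Expanding $(\nabla_W R)(\xi,Y)Z$ by Leibniz and replacing each $R(\xi,\cdot)\cdot$ occurrence by means of (\ref{esi3})(i), together with $\nabla_W\xi=-\phi W-\phi hW$ from (\ref{Kill})(iii), the expression reduces to a polynomial in $k$, $g$, $\eta$, $\phi$, and $h$. After simplification via $h\phi+\phi h=0$ and $\eta\circ\phi=0$, this produces an identity of the shape
\begin{equation*}
(\nabla_W S)(Y,Z) + A(W)\,S(Y,Z) \;=\; k\,\Psi(W,Y,Z),
\end{equation*}
where $\Psi$ is an explicit tensor that vanishes when $Y$ or $Z$ is collinear with $\xi$.

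Finally I would feed (\ref{esi3})(ii) in the form $S(Y,\xi)=2k\,\eta(Y)$ back in. Setting $Z=\xi$ in the contracted relation, the left-hand side is computed as
\begin{equation*}
(\nabla_W S)(Y,\xi) \;=\; 2k\,g(Y,\nabla_W\xi) \;-\; S(Y,\nabla_W\xi),
\end{equation*}
and comparing with the right-hand side (which reduces to a multiple of $A(W)\eta(Y)$ plus $h$-terms) pins $S$ down to $S(Y,Z)=2k\,g(Y,Z)$, i.e.\ $M$ is Einstein with Einstein constant $2k$. Since in a three-dimensional Riemannian manifold the curvature is determined by the Ricci tensor via
\begin{equation*}
R(X,Y)Z = g(Y,Z)QX - g(X,Z)QY + S(Y,Z)X - S(X,Z)Y - \tfrac{r}{2}\bigl[g(Y,Z)X - g(X,Z)Y\bigr],
\end{equation*}
substituting $QX=2kX$ and $r=6k$ immediately yields $R(X,Y)Z=k\,[g(Y,Z)X-g(X,Z)Y]$, so $M$ has constant curvature $k$. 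The main obstacle will be the middle step: $(\nabla_W R)(\xi,Y)Z$ generates many cross terms through $\nabla_W\xi$ and through $h$, and a careful accounting, using $h\phi+\phi h=0$, $h\xi=0$, and the fact that $k$ is constant, is needed so that $\Psi$ is simple enough for the final specialisations of $(W,Y,Z)$ to force the Einstein condition.
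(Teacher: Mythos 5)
Your strategy --- contract (\ref{rec}) over $X$ to obtain a recurrence for the Ricci tensor and then invoke the three-dimensional curvature decomposition --- is essentially the route of \cite{DG}, not of this paper, and it lands you on a conclusion that cannot be correct as stated. The contraction itself does work out cleanly: the correction term $\eta\bigl((\nabla_WR)(\xi,Y)Z\bigr)$ vanishes identically on an $N(k)$-contact metric manifold (write it as $-g\bigl((\nabla_WR)(\xi,Y)\xi,Z\bigr)$, expand by Leibniz using $R(X,Y)\xi=k[\eta(Y)X-\eta(X)Y]$ and $R(\xi,Y)\nabla_W\xi=k\,g(\nabla_W\xi,Y)\xi$, and use $\eta(\nabla_W\xi)=0$), so you get the clean identity $(\nabla_WS)(Y,Z)=-A(W)\,S(Y,Z)$. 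But now set $Y=Z=\xi$: since $S(\xi,\xi)=2k$ is constant and $\eta(\nabla_W\xi)=0$, the left-hand side is zero while the right-hand side is $-2k\,A(W)$, so $k\,A(W)=0$ and hence $k=0$. Your claimed endpoint $S=2k\,g$ with constant curvature $k$ is therefore unreachable for $k\neq 0$; it is in fact internally inconsistent, because a $\phi$-recurrent manifold of constant curvature has $\nabla R=0$, whence (\ref{rec}) forces $A(W)R(X,Y)Z=0$ and $R=0$ --- this is precisely the content of Theorem \ref{Fardin}. The step in which you ``pin down'' $S$ on vectors orthogonal to $\xi$ is also unjustified as sketched: it requires $\nabla\xi$ to have full rank on $\ker\eta$, i.e.\ $I+h$ nonsingular, which fails exactly in the one case ($k=0$) that survives.

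Note also that the paper does not prove Theorem \ref{3.2} in the sense you attempt: it quotes it from \cite{DG} and then shows (Theorem \ref{3.4}) that the hypotheses force $k=0$ and, via Blair's Theorem \ref{3.3}, flatness, so that ``constant curvature'' can only occur with curvature zero. The paper's argument avoids the Ricci contraction entirely: it derives the explicit form (\ref{o3}) of $R$ from the three-dimensional decomposition and the $N(k)$ condition alone, differentiates to obtain $(\nabla_WR)(\xi,Y)\xi=0$, and then (\ref{rec}) yields $k\,A(W)Y=0$, hence $k=0$. If you push your own computation the one extra step indicated above you recover this same, stronger conclusion; as written, however, your proposal asserts a non-degenerate conclusion that is false for $k\neq 0$, so there is a genuine gap.
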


On the other hand,  Blair proved the following.
\begin{theorem}{\rm (\cite{B})}\label{3.3}
\emph{A contact metric manifold $M^{2n+1}$ satisfying $R(X, Y)\xi=0$
is locally isometric to $E^{n+1}\times S^n(4)$ for $n>1$ and flat for $n=1$.}
\end{theorem}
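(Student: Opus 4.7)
The plan is to leverage the hypothesis $R(X,Y)\xi = 0$ to force a strong algebraic restriction on the symmetric operator $h = \tfrac{1}{2}\pounds_\xi\phi$, then to produce two parallel, orthogonally complementary distributions whose leaves have the required metric geometry, and finally to invoke the local de Rham theorem. The starting point is the identity
\[
R(X,\xi)\xi - \phi R(\phi X, \xi)\xi = 2\bigl[h^{2} X + \phi^{2} X\bigr],
\]
valid on any contact metric manifold. Specialising to $R(X,Y)\xi = 0$ gives $h^{2} = -\phi^{2}$, so $h^{2}X = X - \eta(X)\xi$; on the contact distribution $\ker\eta$ the operator $h$ therefore squares to the identity.

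Since $h$ is symmetric with $h\xi = 0$, $\ker\eta$ splits orthogonally into the $\pm 1$-eigenspaces $\Lc_{\pm}$ of $h$, and because $h\phi + \phi h = 0$ the map $\phi$ interchanges $\Lc_{+}$ and $\Lc_{-}$, forcing $\dim\Lc_{+} = \dim\Lc_{-} = n$. To promote this algebraic splitting to a geometric one I would analyse parallel transport of $\xi$ using $\nabla_X\xi = -\phi X - \phi hX$ from (\ref{Kill}). This evaluates to $\nabla_X\xi = 0$ for $X\in\Lc_{-}$ and to $\nabla_X\xi = -2\phi X \in \Lc_{-}$ for $X\in\Lc_{+}$. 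Coupled with Blair's formula for $\nabla h$ specialised to $k=0$, these relations let one verify that the distribution $\langle\xi\rangle \oplus \Lc_{-}$ is parallel with flat induced metric and that $\Lc_{+}$ is parallel with induced sectional curvature equal to $4$; the curvature computations reduce to Lie-bracket identities on an adapted orthonormal frame together with the standard contact-metric symmetries.

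The local de Rham decomposition applied to the two complementary parallel distributions then yields $M \cong E^{n+1}\times S^{n}(4)$ locally, for $n>1$. For $n=1$ the leaves of $\Lc_{+}$ are one-dimensional and hence intrinsically flat regardless of the ambient sectional curvature, so $M$ is locally isometric to $E^{3}$, i.e.\ flat, recovering the stated dichotomy.

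The main technical obstacle will be pinning down the induced sectional curvature on the $\Lc_{+}$ leaves to be exactly $4$, rather than merely some positive constant. This normalisation is forced by $d\eta(X,Y) = g(X,\phi Y)$ combined with the $h$-$\phi$ anticommutation, and it is the step at which the proof genuinely uses the contact metric structure as opposed to a generic Riemannian manifold admitting an eigenvalue decomposition of a symmetric endomorphism. A secondary, but more mechanical, difficulty is checking parallelism of both distributions simultaneously, which requires feeding the algebraic identity $h^2 = -\phi^2$ back into the generic formula for $\nabla h$ to kill the residual curvature terms.
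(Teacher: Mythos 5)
First, note that the paper does not prove this statement at all: Theorem~\ref{3.3} is quoted from Blair's book \cite{B} and is used here purely as a black box (its only role is to convert $R(X,Y)\xi=0$ into flatness when $n=1$). So there is no in-paper proof to compare yours against; the relevant comparison is with Blair's original argument, and your outline is essentially a faithful reconstruction of it: the identity $\tfrac12\bigl(R(\xi,X)\xi-\phi R(\xi,\phi X)\xi\bigr)=h^{2}X+\phi^{2}X$ (your version differs by an immaterial sign convention) forces $h^{2}=-\phi^{2}$, hence $h^{2}=\mathrm{id}$ on $\ker\eta$; the $\pm1$-eigenspaces of $h$ are swapped by $\phi$ and so are each $n$-dimensional; the formula $\nabla_X\xi=-\phi X-\phi hX$ identifies $\langle\xi\rangle\oplus\Lc_{-}$ and $\Lc_{+}$ as the candidate factors; and de Rham finishes. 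Your disposal of the case $n=1$ (a one-dimensional factor carries no intrinsic curvature, so the local product is flat) is also the standard one, and your assignment of the flat factor to $\langle\xi\rangle\oplus\Lc_{-}$ is consistent with the sign conventions in (\ref{Kill}).

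That said, as written this is an outline rather than a proof, and the two points you flag as ``technical obstacles'' are precisely where essentially all of the work lives. Parallelism of the two distributions (equivalently, integrability with totally geodesic leaves) does not follow from the formula for $\nabla_X\xi$ alone: one must control $\nabla_XY$ for $X,Y$ in the eigenspaces, which requires feeding $R(X,Y)\xi=0$ into the second identity of (\ref{Kill}) (the formula for $2(\nabla_{hX}\phi)Y$) and occupies several pages in \cite{B}. Likewise, that the $\Lc_{+}$-leaves have curvature exactly $4$ is a genuine computation with $d\eta(X,Y)=g(X,\phi Y)$, not a normalization one can wave through. If you intend a self-contained proof you must supply these computations; if you intend a reconstruction of Blair's argument, the skeleton you give is the correct one.
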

\begin{rem}\label{rem1}
Using the above theorem, if $k=0$, then $N(k)$-contact metric manifold $M^3$ is flat. Thus according to Remark \ref{rem}, it is easy to see that   3-dimensional $N(k)$-contact metric manifold $M$ is $\phi$-recurrent, symmetric and locally $\phi$-symmetric. Therefore according to the Example \ref{Ex}, it is deduced that the Theorem \ref{3.2} is proved for $k\neq 0$.
\end{rem}

\bigskip

Here, we show that Theorem \ref{3.2} (Theorem 4.1 in \cite{DG}) is not correct. At first, we prove the following theorem:
\begin{theorem}\label{3.4}
Every  3-dimensional manifold $M$ is a $\phi$-recurrent $N(k)$-contact metric
manifold if and only if it is a flat manifold.
\end{theorem}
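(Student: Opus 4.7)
The plan is to prove the two implications separately, keeping Theorem \ref{3.2} as the only nontrivial ingredient. The \emph{if} direction should follow immediately from Remark \ref{rem}: if $M^3$ is flat then $R\equiv 0$ and $\nabla R\equiv 0$, so the $\phi$-recurrence identity (\ref{rec}) reads $0=A(W)\cdot 0$ and is satisfied by any nonzero 1-form $A$, while $R(X,Y)\xi=0$ exhibits $\xi$ as lying in the nullity distribution $N(0)$; hence $M$ is a $\phi$-recurrent $N(0)$-contact metric manifold.

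For the \emph{only if} direction my plan is as follows. First, I would invoke Theorem \ref{3.2} to conclude that a 3-dimensional $\phi$-recurrent $N(k)$-contact metric manifold has constant sectional curvature $c$, so $R(X,Y)Z=c[g(Y,Z)X-g(X,Z)Y]$ everywhere on $M$. Because $g$ is parallel and $c$ is a constant, this forces $\nabla R=0$ (a direct expansion of $(\nabla_W R)(X,Y)Z$ confirms it). Substituting into (\ref{rec}) produces
\[
0=\phi^2\bigl((\nabla_W R)(X,Y)Z\bigr)=A(W)R(X,Y)Z
\]
for all $X,Y,Z,W$. Since $A$ is nonzero as a 1-form, one can choose a point and a vector $W_0$ with $A(W_0)\neq 0$; at that point $R=0$, and the constancy of $c$ then propagates this to $R\equiv 0$ globally, i.e.\ $M$ is flat. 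Finally, comparing $R(X,Y)\xi=0$ with (\ref{esi3})(i), evaluated, say, at $X=\xi$ and $Y$ a unit vector orthogonal to $\xi$, yields $k=0$, so the structure is automatically $N(0)$-contact metric.

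The main obstacle, if any, is the black-box use of Theorem \ref{3.2}, which does the heavy lifting of the reduction to constant curvature. Once that is granted, the remainder is the near-tautological observation that $\phi$-recurrence is incompatible with local symmetry unless the curvature vanishes identically: $\nabla R=0$ kills the left-hand side of (\ref{rec}), and the nonvanishing of $A$ then propagates the vanishing to the right-hand side. The same mechanism will underlie the more general constant-curvature statements anticipated for Section 4.
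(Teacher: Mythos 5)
Your ``if'' direction is fine and matches the paper's (Remark \ref{rem} plus Remark \ref{rem1}): flatness gives $R=0$ and $\nabla R=0$, so (\ref{rec}) holds for any nonzero $A$ and $\xi\in N(0)$.

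The ``only if'' direction, however, has a genuine gap: it rests entirely on Theorem \ref{3.2}, and that is precisely the result this paper sets out to discredit. The authors show that the example in \cite{DG} purporting to exhibit a nonflat $\phi$-recurrent $N(k)$-contact metric manifold is erroneous, and the stated purpose of Section 3 is to conclude that Theorem 4.1 of \cite{DG} ``is not correct.'' You cannot, in this context, take that theorem as a black box: its proof in \cite{DG} is exactly what is under suspicion, and the paper's own proof of Theorem \ref{3.4} conspicuously avoids invoking it. (Granting Theorem \ref{3.2}, the rest of your argument --- constant curvature forces $\nabla R=0$, so (\ref{rec}) gives $A(W)R=0$ and hence $R\equiv 0$ --- is sound, and is in fact the mechanism the paper uses later in Theorems \ref{Fardin} and \ref{4.4}; but the reduction to constant curvature is the whole difficulty, and you have not supplied it.)

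The paper closes this gap directly. Starting from the standard $3$-dimensional curvature decomposition (\ref{N}) and the $N(k)$ identities (\ref{esi3}), it computes the Ricci operator (\ref{Na1}) and obtains the explicit expression (\ref{o3}) for $R$, which is \emph{not} assumed to be of constant curvature. Differentiating (\ref{o3}) and specializing to $X=Z=\xi$, together with $(\nabla_W\eta)(\xi)=0$ (a consequence of (\ref{Kill})(iii)), yields $(\nabla_WR)(\xi,Y)\xi=0$ identically. Then $\phi$-recurrence gives $A(W)R(\xi,Y)\xi=0$, while the $N(k)$ condition gives $R(\xi,Y)\xi=-kY$ for $Y\perp\xi$; since $A\neq 0$ this forces $k=0$, and Blair's Theorem \ref{3.3} then gives flatness. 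If you want to keep your outline, you must replace the appeal to Theorem \ref{3.2} by an independent derivation of this kind.
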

\begin{proof}
It is known that the Riemannian curvature of a 3-dimensional Riemannian manifold $M$ satisfies in
\begin{align}\label{N}
R(X, Y)Z&=g(Y, Z)QX-g(X, Z)QY+S(Y, Z)X-S(X, Z)Y\nonumber\\
&\ \ \ +\frac{r}{2}[g(X, Z)Y-g(Y, Z)X],
\end{align}
where $Q$ is the Ricci operator, that is , $g(QX, Y)=S(X, Y)$ and $r$ is the scalar curvature of $M$.
Now, let $M$ be a 3-dimensional $\phi$-recurrent $N(k)$-contact metric manifold. Putting $Z=\xi$ in (\ref{N}) and using (ii) of (\ref{esi3}) and $\eta(\xi)=1$, yields
\be
R(X, Y)\xi=(2k-\frac{r}{2})[\eta(Y)X-\eta(X)Y]+\eta(Y)QX-\eta(X)QY.\label{o1}
\ee
Part (i) of (\ref{esi3}) and (\ref{o1}) give us
\be
(k-\frac{r}{2})[\eta(Y)X-\eta(X)Y]=\eta(X)QY-\eta(Y)QX.\label{o2}
\ee
By setting $Y=\xi$ in (\ref{o2}) and using (ii) of (\ref{esi3}), it follows that
\begin{equation}\label{Na1}
QX=(\frac{r}{2}-k)X+(3k-\frac{r}{2})\eta(X)\xi,
\end{equation}
which gives us
\begin{equation}\label{Na2}
S(X, Y)=g(QX, Y)=(\frac{r}{2}-k)g(X, Y)+(3k-\frac{r}{2})\eta(X)\eta(Y).
\end{equation}
Using (\ref{Na1}), (\ref{Na2}) and (\ref{N}) we get
\begin{align}
R(X, Y)Z&=(3k-\frac{r}{2})[g(Y, Z)\eta(X)\xi-g(X, Z)\eta(Y)\xi+\eta(Y)\eta(Z)X\nonumber\\
&\ \ \ -\eta(X)\eta(Z)Y]+(\frac{r}{2}-2k)[g(Y, Z)X-g(X, Z)Y].\label{o3}
\end{align}
By (\ref{o3}), we get
\begin{align}\label{Im0}
(\nabla_WR)(X, Y)Z&=\nabla_WR(X, Y)Z-R(\nabla_WX, Y)Z\nonumber\\
&\ \ -R(X, \nabla_WY)Z-R(X, Y)\nabla_WZ\nonumber\\
&=\frac{dr(W)}{2}[g(Y, Z)X - g(X, Z)Y - g(Y, Z)\eta(X)\xi\nonumber\\
&\ \ + g(X, Z)\eta(Y)\xi - \eta(Y)\eta(Z)X+\eta(X)\eta(Z)Y]\nonumber\\
&\ \ + (3k-\frac{r}{2})[g(Y, Z)\eta(X) - g(X, Z)\eta(Y)]\nabla_W\xi\nonumber\\
&\ \ + (3k-\frac{r}{2})[\eta(Y)X - \eta(X)Y](\nabla_W\eta)(Z)\nonumber\\
&\ \ + (3k-\frac{r}{2})[g(Y, Z)\xi - \eta(Z)Y](\nabla_W\eta)(X)\nonumber\\
&\ \ - (3k-\frac{r}{2})[g(X, Z)\xi - \eta(Z)X](\nabla_W\eta)(Y).
\end{align}
Now, let $Y$ be a non-zero vector field orthogonal to $\xi$ and $X=Z=\xi$. Then from (\ref{Im0}), we get
\begin{equation}\label{sa}
(\nabla_WR)(\xi, Y)\xi=-2(3k-\frac{r}{2})(\nabla_W\eta)(\xi)Y.
\end{equation}
Since $\eta(\xi)=1$ and $\eta\circ\phi=0$, then using part (iii) of (\ref{Kill}) we obtain
\begin{equation}\label{sa1}
(\nabla_W\eta)(\xi)=W(\eta(\xi))-\eta(\nabla_W\xi)=\eta(\phi W+\phi hW)=0.
\end{equation}
By plugging (\ref{sa1}) in (\ref{sa}),  we have
\begin{equation}\label{sa2}
(\nabla_WR)(\xi, Y)\xi=0.
\end{equation}
Since $M$ is a $\phi$-recurrent manifold, then  there exists a non-zero 1-form $A$ such that satisfies in (\ref{rec}). Thus using (\ref{rec}) and (\ref{sa2}),  we deduce that
\begin{equation}\label{sa3}
A(W)R(\xi, Y)\xi=0.
\end{equation}
As $M$ is $N(k)$-contact metric manifold, then we have
\begin{equation}\label{sa4}
R(\xi, Y)\xi=k[\eta(Y)\xi-\eta(\xi)Y]=-kY.
\end{equation}
Setting (\ref{sa4}) in (\ref{sa3}), implies that $kA(W)Y=0$ which gives us $k=0$. Thus we have $R(X, Y)\xi=0$. Therefore, by using the  Theorem \ref{3.3} we can conclude  that $M^3$ is a flat manifold. From Remark \ref{rem1}, the converse of the theorem is obvious.
\end{proof}
Using Remark \ref{rem1} and the Theorem \ref{3.4},  we have the following.
\begin{cor}\label{3.6}
There is no $\phi$-recurrent $N(k)$-contact metric manifold with dimension 3, for $k\neq 0$.
\end{cor}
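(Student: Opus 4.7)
The plan is a short argument by contradiction that combines Theorem \ref{3.4} with the defining curvature identity of an $N(k)$-contact metric manifold. I would suppose, toward a contradiction, that a 3-dimensional $\phi$-recurrent $N(k)$-contact metric manifold $M$ existed with $k \neq 0$.

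The first step is to invoke Theorem \ref{3.4}: being a $\phi$-recurrent $N(k)$-contact metric manifold in dimension $3$ already forces $M$ to be flat, so $R \equiv 0$ on $M$. The second step is to confront this with part (i) of (\ref{esi3}), which for an $N(k)$-contact metric manifold asserts
\[
R(X,Y)\xi = k\bigl[\eta(Y)X - \eta(X)Y\bigr]
\]
for all vector fields $X,Y$. Substituting $R \equiv 0$ into the left-hand side and then specializing to $Y=\xi$ (so that $\eta(Y)=1$) and any $X$ not collinear with $\xi$, the right-hand side becomes $k(X-\eta(X)\xi)$, which is a non-zero vector. Hence $k=0$, contradicting the standing hypothesis $k \neq 0$.

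I do not expect a real obstacle here, because all the delicate work is encapsulated in Theorem \ref{3.4}; the corollary is genuinely a bookkeeping consequence. The only sanity check worth making is that in dimension $3$ there is always a tangent vector not proportional to $\xi$, which is immediate, and that the $N(k)$-condition (i) of (\ref{esi3}) is part of the very definition being assumed, so it is available without any additional work.
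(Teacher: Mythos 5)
Your proposal is correct and follows essentially the same route as the paper, which derives the corollary directly from Theorem \ref{3.4}: flatness forces $R\equiv 0$, and the $N(k)$-identity (i) of (\ref{esi3}) applied to a vector not collinear with $\xi$ then forces $k=0$. (In fact the paper's proof of Theorem \ref{3.4} already establishes $k=0$ along the way, so your deduction is exactly the intended bookkeeping.)
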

From the above corollary we deduce
\begin{cor}
There is no 3-dimensional $\phi$-recurrent Sasakian manifold.
\end{cor}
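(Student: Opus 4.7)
The plan is to observe that this corollary is essentially immediate from Corollary \ref{3.6}, once one notes that every Sasakian manifold is an $N(k)$-contact metric manifold for a specific nonzero value of $k$.

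First, I would recall from Section 2 the characterization that a contact metric manifold $M$ is Sasakian if and only if $R(X,Y)\xi = \eta(Y)X - \eta(X)Y$ for all vector fields $X,Y$. Comparing this with the defining equation of the $k$-nullity distribution, namely $R(X,Y)Z = k[g(Y,Z)X - g(X,Z)Y]$ when $Z = \xi$ (and using $\eta(X) = g(X,\xi)$), one sees that $\xi$ belongs to $N(1)$. Thus every Sasakian manifold is an $N(k)$-contact metric manifold with $k = 1$.

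With this identification in hand, the conclusion is forced: if a 3-dimensional $\phi$-recurrent Sasakian manifold $M^3$ existed, it would be a 3-dimensional $\phi$-recurrent $N(k)$-contact metric manifold with $k = 1 \neq 0$, contradicting Corollary \ref{3.6}.

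There is essentially no obstacle to carrying this out; the genuine work was already done in Theorem \ref{3.4} and Corollary \ref{3.6}, where the case $k \neq 0$ was ruled out. The only point worth being careful about is verifying the inclusion ``Sasakian $\Rightarrow$ $N(1)$-contact metric'' from the definitions already collected in Section 2, after which the corollary reduces to a one-line application of Corollary \ref{3.6} at $k = 1$.
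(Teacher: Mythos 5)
Your proposal is correct and matches the paper's (implicit) argument exactly: the paper deduces this corollary from Corollary \ref{3.6} precisely because a Sasakian manifold is an $N(k)$-contact metric manifold with $k=1\neq 0$, which is the identification you verify from the characterization $R(X,Y)\xi=\eta(Y)X-\eta(X)Y$ given in Section 2. Nothing further is needed.
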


Using Remark \ref{rem1}, Theorem \ref{3.4} and Corollary \ref{3.6},  we conclude the following.
\begin{theorem}\label{3.7}
There is no 3-dimensional $\phi$-recurrent $N(k)$-contact metric
manifold which is neither symmetric nor locally $\phi$-symmetric
\end{theorem}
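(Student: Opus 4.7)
The plan is to treat Theorem \ref{3.7} as a direct consequence of the results already established in this section---specifically Theorem \ref{3.4} together with Corollary \ref{3.6} and Remark \ref{rem1}. Since these reduce the entire classification of 3-dimensional $\phi$-recurrent $N(k)$-contact metric manifolds to the flat case, the argument is essentially complete, and I would only need to verify that flat 3-manifolds are both symmetric and locally $\phi$-symmetric.

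I would split on the value of $k$. For $k \neq 0$, I would invoke Corollary \ref{3.6} to conclude that no 3-dimensional $\phi$-recurrent $N(k)$-contact metric manifold exists at all, so the claim holds vacuously in this regime. For $k = 0$, Theorem \ref{3.3} forces $M^3$ to be flat, so $R \equiv 0$ and hence $\nabla R \equiv 0$. Consequently $M$ is locally symmetric, and the defining relation of local $\phi$-symmetry, $\phi^2((\nabla_W R)(X,Y)Z) = 0$ for all $X, Y, Z, W$ orthogonal to $\xi$, holds trivially as well. Therefore every 3-dimensional $\phi$-recurrent $N(k)$-contact metric manifold is simultaneously symmetric and locally $\phi$-symmetric, which is exactly the contrapositive of the desired nonexistence statement.

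The main obstacle was already overcome in the proof of Theorem \ref{3.4}, where $\phi$-recurrence was shown to force $k A(W) Y = 0$ through the computation of $(\nabla_W R)(\xi, Y)\xi$ together with the $N(k)$-condition $R(\xi, Y)\xi = -k Y$. Once that step is in hand, the present theorem is really a repackaging of earlier conclusions and requires no new calculation; its value lies in the negative existence statement, which complements the refutation of Example \ref{Ex} and corrects Theorem 4.1 of \cite{DG}.
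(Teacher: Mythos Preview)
Your proposal is correct and follows essentially the same route as the paper, which simply records Theorem \ref{3.7} as an immediate consequence of Remark \ref{rem1}, Theorem \ref{3.4}, and Corollary \ref{3.6}. Your case split on $k$ and the observation that the flat case is automatically both symmetric and locally $\phi$-symmetric is exactly the intended argument.
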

Therefore, by using Remark \ref{rem1}, Corollary \ref{3.6} and Theorem \ref{3.7}, we conclude that Theorem \ref{3.2} is not correct.
\section{$\phi$-recurrent contact metric  manifolds of constant curvature}
In this section, we show that the only flat manifolds of dimension 3 can be $\phi$-recurrent contact metric manifolds of constant curvature. For this work,  we present  a fact that shows  Theorem \ref{3.2} (Theorem 4.1 in \cite{DG}) is not correct, again.
\begin{theorem}\label{Fardin}
For $k\neq 0$, there exists no  $(2n+1)$-dimensional $\phi$-recurrent $N(k)$-contact metric manifold of constant curvature.
\end{theorem}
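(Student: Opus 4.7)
My plan is to argue by contradiction. Suppose $M^{2n+1}$ is a $\phi$-recurrent $N(k)$-contact metric manifold of constant curvature $c$ with $k\neq 0$. I would first identify $c$ with $k$ by evaluating the curvature in two ways on the triple $(X,\xi,\xi)$. From constant curvature,
$$R(X,\xi)\xi = c\bigl[X - \eta(X)\xi\bigr],$$
while from part (i) of (\ref{esi3}),
$$R(X,\xi)\xi = k\bigl[X - \eta(X)\xi\bigr].$$
Choosing any nonzero $X$ orthogonal to $\xi$ (possible since $\dim M = 2n+1 \geq 3$) the coefficients must agree, so $c = k$.

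The second step exploits the fact that in a space of constant sectional curvature $R(X,Y)Z = k[g(Y,Z)X - g(X,Z)Y]$ with $k$ a genuine scalar constant. Because $\nabla g = 0$, a routine product-rule calculation yields $(\nabla_W R)(X,Y)Z = 0$ identically; that is, $M$ is automatically locally symmetric. Substituting this into the $\phi$-recurrence relation (\ref{rec}) produces
$$A(W)\,R(X,Y)Z \;=\; \phi^2\bigl((\nabla_W R)(X,Y)Z\bigr) \;=\; 0$$
for every $W,X,Y,Z$. Since $A$ is a non-zero 1-form (Definition \ref{hasan6}), there is $W_0$ with $A(W_0)\neq 0$, whence $R \equiv 0$, i.e.\ $M$ is flat.

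Finally I would use (i) of (\ref{esi3}) once more: flatness forces $k[\eta(Y)X - \eta(X)Y] = 0$, and taking $X$ nonzero with $\eta(X)=0$ together with $Y = \xi$ yields $kX = 0$, hence $k=0$, contradicting the standing hypothesis $k\neq 0$. The only step I expect could need a brief word of care is passing from "constant curvature" to $\nabla R = 0$: this relies on the curvature constant being a true scalar, which in dimension $2n+1\geq 3$ is standard (Schur-type), but is worth flagging explicitly. All other steps are short direct manipulations of the tensor identities already recorded in Section 2, so the result should follow essentially as a three-line chase once $c=k$ and $\nabla R=0$ are in hand.
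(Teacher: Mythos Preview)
Your proof is correct and follows essentially the same route as the paper's: identify the curvature constant with $k$ (the paper's sign convention in (\ref{curv}) yields $\lambda=-k$ rather than $c=k$, but this is immaterial), observe that constant curvature forces $\nabla R=0$, and plug into the $\phi$-recurrence relation (\ref{rec}) to contradict $k\neq 0$. Your brief detour through $R\equiv 0$ before extracting $k=0$ is a trivial rephrasing of the paper's direct substitution $Y=Z=\xi$ in (\ref{rec}); the content is identical.
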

\begin{proof}
Let $M^{2n+1}$ be a $\phi$-recurrent $N(k)$-contact metric manifold which has the constant curvature $\lambda$. Then we have
\begin{equation}\label{curv}
R(X, Y)Z=\lambda(g(X, Z)Y-g(Y, Z)X).
\end{equation}
Setting $Z=\xi$ in (\ref{curv}) yields
\begin{equation}\label{curv1}
R(X, Y)\xi=\lambda(\eta(X)Y-\eta(Y)X).
\end{equation}
Since $M$ is $N(k)$-contact metric manifold, then we have
\begin{equation}\label{curv2}
R(X, Y)\xi=k(\eta(Y)X-\eta(X)Y).
\end{equation}
(\ref{curv1}) and (\ref{curv2}) give us
\be
k(\eta(Y)X-\eta(X)Y)=\lambda(\eta(X)Y-\eta(Y)X).\label{o5}
\ee
Let $X=\xi$ and $Y$ be a non-zero vector field orthogonal to $\xi$. Then we have $\eta(Y)=0$ and $\eta(X)=1$. Thus from (\ref{o5}),  we deduce that
\[
-kY=\lambda Y, \ \ \  \textrm{or} \ \ \  \lambda=-k\neq 0.
\]
Thus by using (\ref{curv}),  we have
\begin{equation}\label{curv3}
R(X, Y)Z=-k(g(X, Z)Y-g(Y, Z)X).
\end{equation}
By (\ref{curv3}), it follows that
\begin{align}\label{Im}
(\nabla_WR)(X, Y)Z&=\nabla_WR(X, Y)Z-R(\nabla_WX, Y)Z-R(X, \nabla_WY)Z\nonumber\\
&\ \ -R(X, Y)\nabla_WZ=-k[(\nabla_Wg(X, Z))Y+g(X, Z)\nabla_WY\nonumber\\
&\ \ -(\nabla_Wg(Y, Z))X-g(Y, Z)\nabla_WX-g(\nabla_WX, Z)Y\nonumber\\
&\ \ +g(Y, Z)\nabla_WX-g(X, Z)\nabla_WY+g(\nabla_WY, Z)X\nonumber\\
&\ \ -g(X, \nabla_WZ)Y+g(Y, \nabla_WZ)X]\nonumber\\
&=-k[((\nabla_Wg)(X, Z))Y-((\nabla_Wg)(Y, Z))X]\nonumber\\
&=0.
\end{align}
Also from (\ref{curv3}),  we get
\begin{equation}\label{esi10}
R(X, \xi)\xi=k[X-\eta(X)\xi].
\end{equation}
Putting $Y=Z=\xi$ in (\ref{rec}) and using (\ref{Im}) and (\ref{esi10}) imply  that
\be
kA(W)[X-\eta(X)\xi]=0.\label{aki}
\ee
If $X$ is a non-zero vector field orthogonal to $\xi$, then (\ref{aki}) gives us $kA(W)X=0$, which is a contradiction to $k\neq 0$ and $A(W)\neq 0$.
\end{proof}

In \cite{B}, Blair proved the following.
\begin{theorem}\label{Blair}
If a contact metric manifold $M^{2n+1}$ is of constant curvature
$\lambda$ and $n>1$, then $\lambda=1$ and the structure is Sasakian.
\end{theorem}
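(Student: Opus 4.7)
My plan is to exploit constant curvature to identify $M$ as an $N(\lambda)$-contact metric manifold and then extract strong constraints on the tensor $h$. Setting $Z=\xi$ in the constant curvature formula $R(X,Y)Z=\lambda(g(Y,Z)X-g(X,Z)Y)$ gives $R(X,Y)\xi=\lambda(\eta(Y)X-\eta(X)Y)$, so $\xi$ lies in the $\lambda$-nullity distribution and $M^{2n+1}$ is an $N(\lambda)$-contact metric manifold. This is the same identification used at the start of the proof of Theorem~\ref{Fardin}.

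The next step is to invoke the standard Blair--Koufogiorgos identity $h^{2}=(k-1)\phi^{2}$ valid on any $N(k)$-contact metric manifold. With $k=\lambda$ this gives $h^{2}=(\lambda-1)\phi^{2}$. Since $h$ is symmetric the operator $h^{2}$ is positive semi-definite, while $-\phi^{2}$ is the identity on the contact distribution $\ker\eta$. Therefore $1-\lambda\ge 0$, so $\lambda\le 1$, with equality if and only if $h=0$. In the equality case $h=0$ makes $\xi$ a Killing vector field, and combined with $R(X,Y)\xi=\eta(Y)X-\eta(X)Y$ (which $\lambda=1$ produces) this is precisely the Sasakian condition, and the theorem follows with $\lambda=1$.

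The main obstacle is to exclude $\lambda<1$ when $n>1$, since flat three-dimensional examples (i.e.\ $n=1$, $\lambda=0$) show that no purely algebraic argument blind to the dimension can succeed. My plan here is to compute the Ricci tensor $S$ two ways. Constant sectional curvature gives $S(X,Y)=2n\lambda\, g(X,Y)$ identically. On the other hand, contracting $R$ against an orthonormal frame adapted to the splitting $TM=\ker\eta\oplus\langle\xi\rangle$ and using the relations $h\phi+\phi h=0$ and $h\xi=0$ from (\ref{Kill}) together with $h^{2}=(\lambda-1)\phi^{2}$, one obtains a Ricci formula containing a term of the form $2(n-1)\,g(hX,Y)$. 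For $n>1$ this term has a non-trivial coefficient, so comparing with $S(X,Y)=2n\lambda\, g(X,Y)$ forces $g(hX,Y)=0$ for all $X,Y$ orthogonal to $\xi$, hence $h=0$. By the previous paragraph this forces $\lambda=1$, contradicting $\lambda<1$. The delicate point is producing the contraction identity with the visible coefficient $(n-1)$ in front of $g(hX,Y)$, since this is the precise place where the hypothesis $n>1$ is used.
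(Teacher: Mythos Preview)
The paper does not prove Theorem~\ref{Blair}; it is quoted verbatim from Blair's monograph \cite{B} and used as a black box in the proof of Theorem~\ref{T}. There is therefore no ``paper's own proof'' to compare your attempt against.

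On the substance of your sketch: the first two paragraphs are correct and standard. Constant curvature $\lambda$ does make $M$ an $N(\lambda)$-space, the identity $h^{2}=(\lambda-1)\phi^{2}$ follows, and positivity of $h^{2}$ on $\ker\eta$ gives $\lambda\le 1$ with equality iff $h=0$, which is the Sasakian case.

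The third paragraph, however, is only a plan and hides the real work. The Ricci formula you are reaching for is the Blair--Koufogiorgos--Papantoniou expression for $(\kappa,\mu)$-spaces, specialised to $\mu=0$: for $k<1$ and $n>1$ one has $QX=2(n-1)X+2(n-1)hX+[2(1-n)+2nk]\eta(X)\xi$. Comparing this with $QX=2n\lambda X$ forces $h$ to be a scalar multiple of the identity on $\ker\eta$; then $h\phi+\phi h=0$ kills that scalar, so $h=0$ and hence $\lambda=1$, contradicting $\lambda<1$. So the strategy does close. But this Ricci formula is \emph{not} obtained by a routine contraction of the constant-curvature $R$ as you suggest: it requires first deriving the full curvature tensor of an $N(k)$-space (in particular the mixed terms $R(X,Y)Z$ for $X,Y,Z\in\ker\eta$), which is itself a theorem of comparable depth to the one you are proving. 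Your phrase ``contracting $R$ against an orthonormal frame \ldots\ one obtains'' understates this; as written, the proposal invokes the result rather than proving it. Blair's original argument in \cite{B} proceeds differently, via the formula for $(\nabla_{X}\phi)Y$ displayed after (\ref{Kill}) in the present paper, and does not pass through the Ricci operator.
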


Here,  we are going to  consider the same result for  $\phi$-recurrent contact metric manifold. Then we prove the following.
\begin{theorem}\label{T}
there exists no  $(2n+1)$-dimensional $\phi$-recurrent contact metric manifold of constant curvature in which $n>1$.
\end{theorem}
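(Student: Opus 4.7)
The plan is a two-step reduction. Suppose for contradiction that $M^{2n+1}$, with $n>1$, is a $\phi$-recurrent contact metric manifold of constant curvature $\lambda$. Since $n>1$, Blair's theorem (Theorem \ref{Blair}) forces $\lambda=1$ and $M$ to be Sasakian. But a Sasakian manifold satisfies $R(X, Y)\xi=\eta(Y)X-\eta(X)Y$, which shows that $\xi\in N(1)$; equivalently, $M$ is an $N(k)$-contact metric manifold with $k=1\neq 0$. The hypotheses of Theorem \ref{Fardin} are therefore met, and that theorem directly supplies the desired contradiction. This is the cleanest argument.

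For an unpacked, self-contained version, one can simply repeat the short computation behind Theorem \ref{Fardin} in this specific setting. Constant curvature together with $\nabla g=0$ (and the fact that $\lambda=1$ is constant) yields $\nabla R=0$, so in particular $\phi^2((\nabla_WR)(X, Y)Z)=0$. The $\phi$-recurrence identity (\ref{rec}) then forces $A(W)R(X, Y)Z=0$ for all vector fields $X, Y, Z, W$. Choosing $Y=Z=\xi$ and $X$ a unit vector orthogonal to $\xi$, the constant-curvature-one formula gives
\[
R(X, \xi)\xi=\eta(X)\xi-X=-X\neq 0,
\]
so $A(W)=0$ for every $W$, contradicting the assumption that $A$ is a non-zero $1$-form.

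There is essentially no obstacle beyond invoking Theorem \ref{Blair}: once we are in the Sasakian constant-curvature-one setting, $\phi$-recurrence is incompatible with $R(\cdot, \xi)\xi\not\equiv 0$ by the same $\nabla R=0$ trick used in the proof of Theorem \ref{Fardin}. Note that the dimensional hypothesis $n>1$ enters only through Blair's theorem; the three-dimensional situation is the content of Section 3, where flat examples are admitted and the argument has to proceed differently (via the $3$-dimensional curvature decomposition involving the Ricci tensor rather than the constant-curvature ansatz).
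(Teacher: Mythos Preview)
Your proof is correct and matches the paper's approach: both invoke Theorem~\ref{Blair} to force $\lambda=1$, observe that constant curvature gives $\nabla R=0$, and then evaluate the $\phi$-recurrence identity at $\xi$ to contradict $A\neq 0$. The only cosmetic difference is that you package the second step as a citation of Theorem~\ref{Fardin} (via the observation Sasakian $\Rightarrow$ $N(1)$), whereas the paper repeats the short computation inline with the roles of $X$ and $Y$ swapped.
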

\begin{proof}
Let $M$ be a $\phi$-recurrent contact metric manifold of constant curvature $\lambda$ with dimension $2n+1$, where $n>1$. Considering Theorem \ref{Blair}, we deduce that $\lambda=1$. Thus we have
\begin{equation}\label{F}
R(X, Y)Z=g(X, Z)Y-g(Y, Z)X.
\end{equation}
Similar to proof of (\ref{Im}) in Theorem \ref{Fardin}, by (\ref{F}) we get the following
\begin{equation}\label{F1}
(\nabla_WR)(X, Y)Z=0.
\end{equation}
Let $X=Z=\xi$ and $Y$ be a non-zero vector field orthogonal to $\xi$. Then (\ref{F}) gives us
\begin{equation}\label{F2}
R(\xi, Y)\xi=Y.
\end{equation}
Since $M$ is $\phi$-recurrent, then by using (\ref{rec}), (\ref{F1}) and (\ref{F2}) we deduce that
\[
A(W)Y=0.
\]
But this contradicts $A(W)\neq 0$.
\end{proof}

\bigskip

\begin{theorem}\label{4.4}
A $\phi$-recurrent contact manifold $M^{3}$ is of constant curvature $\lambda$ if and only if $\lambda=0$.
\end{theorem}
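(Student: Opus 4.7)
The plan is to handle the two directions of the iff separately. The converse direction is immediate: if $\lambda = 0$ then $M^3$ is flat, so $R = 0$ and $\nabla R = 0$, and the $\phi$-recurrent identity (\ref{rec}) holds trivially with any non-zero 1-form $A$ --- this is the content of Remark \ref{rem}.

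For the forward direction, my approach is to assume $M^3$ is a $\phi$-recurrent contact metric manifold of constant sectional curvature $\lambda$ and deduce that $\lambda$ must vanish. I would first write the curvature tensor in standard form
\[
R(X, Y)Z = \lambda\big(g(X, Z)Y - g(Y, Z)X\big),
\]
and then observe that, because $\lambda$ is constant and $\nabla g = 0$, a direct computation (the same one that produces identity (\ref{Im}) in the proof of Theorem \ref{Fardin}) gives $(\nabla_W R)(X, Y)Z = 0$ for all $W, X, Y, Z$, with no recourse to any $N(k)$-hypothesis. Applying $\phi^2$ and invoking (\ref{rec}) then yields
\[
A(W)\,R(X, Y)Z \;=\; \phi^2\big((\nabla_W R)(X, Y)Z\big) \;=\; 0
\]
for all $W, X, Y, Z$.

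Since $A$ is a non-zero 1-form, there exists $W_0$ with $A(W_0) \neq 0$, which collapses the above to $R(X, Y)Z \equiv 0$. To extract the value of $\lambda$, I would pick a point $p$ and any two $g$-orthogonal unit vectors $X, Y \in T_pM$ --- always available in dimension three --- and compute $R(X, Y)Y = -\lambda X$. The vanishing of the left-hand side, combined with $X \neq 0$, then forces $\lambda = 0$.

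I do not anticipate any genuine obstacle. The argument is in fact cleaner than that of Theorem \ref{Fardin}: we do not need to first match $\lambda$ against the $N(k)$-parameter $k$, because the parallel nature of a constant-curvature tensor combined with the non-degeneracy of $A$ directly collapses $R$ to zero, after which $\lambda = 0$ is immediate from a one-line computation. The only mild point worth double-checking is simply that one can indeed find a triple $(X, Y, Z)$ in $T_pM$ on which $R$ is non-trivial when $\lambda \neq 0$, which is obvious in dimension three.
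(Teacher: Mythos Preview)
Your proof is correct and follows essentially the same route as the paper: both arguments use the fact that constant curvature gives $\nabla R = 0$, whence the $\phi$-recurrence relation (\ref{rec}) forces $A(W)R(X,Y)Z = 0$, and then a particular choice of vectors extracts $\lambda = 0$. The only cosmetic difference is that the paper evaluates at $X = Z = \xi$ and $Y \perp \xi$ to obtain $\lambda A(W)Y = 0$ directly, whereas you pass through $R \equiv 0$ first and then read off $\lambda = 0$ from an arbitrary orthonormal pair.
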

\begin{proof}
Let $M$ be a 3-dimensional $\phi$-recurrent contact manifold of constant curvature $\lambda$. Then we have
\[
R(X, Y)Z=\lambda(g(X, Z)Y-g(Y, Z)X).
\]
Since $M^3$ is $\phi$-recurrent then similar to proof of Theorem \ref{T}, by using the above equation,  we obtain $\lambda A(W)Y=0$, where $A$ is a non zero 1-form and $Y$ is a non zero vector field on $M$. Thus we deduce that $\lambda=0$. According to Remarks \ref{rem} and \ref{rem1}, the converse of the theorem is obvious.
\end{proof}

\bigskip

Theorems \ref{T} and \ref{4.4} give us the following.
\begin{cor}
The only flat 3-dimensional $\phi$-recurrent contact metric manifolds,  are manifolds of constant curvature.
\end{cor}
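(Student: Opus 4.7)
The statement packages together the two preceding theorems into a single classification, so my plan is to read it as the combination of Theorem~\ref{T} and Theorem~\ref{4.4} and simply do the bookkeeping by dimension. There is no new technical input required; what must be checked is (a) that no dimension $2n+1$ with $n>1$ contributes anything, and (b) that in dimension $3$ the constant curvature must vanish, plus (c) the converse, that flat $3$-dimensional examples really do qualify.

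Concretely, I would start by fixing an arbitrary $\phi$-recurrent contact metric manifold $M^{2n+1}$ of constant curvature $\lambda$. For $n>1$, Theorem~\ref{T} asserts outright that no such manifold exists, so this range of dimensions can be discarded immediately. Thus the only surviving case is $n=1$, i.e.\ $M$ is $3$-dimensional; Theorem~\ref{4.4} then forces $\lambda=0$, so $M$ is flat. For the reverse implication I would appeal to Remark~\ref{rem}, which records that a flat manifold automatically satisfies $R=0$ and $\nabla R=0$, so condition (\ref{rec}) holds trivially and the manifold is $\phi$-recurrent; Remark~\ref{rem1} guarantees that flat $3$-dimensional contact metric structures (the $k=0$ member of the $N(k)$-family) do exist, so the class is nonempty.

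The main obstacle, such as it is, is purely expository: one must be careful to state the corollary correctly, since the literal phrasing is somewhat ambiguous. The content I want to extract is that \emph{the only $\phi$-recurrent contact metric manifolds of constant curvature are the flat ones, and these must be $3$-dimensional}. Once this is stated cleanly, the proof is a two-line invocation of Theorems~\ref{T} and \ref{4.4} together with the trivial observation from Remark~\ref{rem} that flatness implies $\phi$-recurrence; no further calculation is required.
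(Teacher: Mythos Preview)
Your proposal is correct and matches the paper's approach exactly: the paper offers no separate proof at all, simply stating that Theorems~\ref{T} and~\ref{4.4} give the corollary, and your dimension-by-dimension bookkeeping is precisely how one unpacks that sentence. Your added remark about the converse (via Remarks~\ref{rem} and~\ref{rem1}) and your observation that the literal wording of the corollary is ambiguous are both fair points, but they do not diverge from the paper's intent.
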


\bigskip

\noindent
Esmaeil Peyghan and Hassan Nasrabadi\\
Department of Mathematics, Faculty  of Science\\
Arak University\\
Arak 38156-8-8349,  Iran\\
Email: epeyghan@gmail.com
\bigskip

\noindent
Akbar Tayebi\\
Department of Mathematics, Faculty  of Science\\
University of Qom \\
Qom. Iran\\
Email:\ akbar.tayebi@gmail.com


\begin{thebibliography}{MaHo}
\bibitem{A1}  V. I. Arnold, {\it Mathematical methods of classical mechanics}, Springer-Verlag, Berlin--Heidelberg, New York, 1989.
\bibitem{A2}  V. I. Arnold, {\it  Contact geometry: the geometrical method of Gibb's thermodynamics}, Proceedings of the Gibbs Symposium, Yale University, American Mathematical Society, (1990), 163-179.
\bibitem{B} D. E. Blair, {\it Riemannian geometry of contact and symplectic manifolds}, Progress in Math., Birkh\"{a}user, Boston, 2002.
\bibitem{BKS} D. E. Blair, T. Koufogiorgos and R. Sharma, {\it A classification of 3-dimensional contact metric manifolds with $Q\phi=\phi Q$}, Kodai. Math. J., {\bf 13}(1990), 391-401.
\bibitem{BC} E. Boeckx and J. T. Cho, {\it $\eta$-parallel contact metric spaces}, Diff. Geom. Appl., {\bf 22}(2005), 275-285.
\bibitem{Ch} M.C. Chaki, {\it On pseudo symmetric manifolds}, Analele. Sti. Ale. Univ. {\bf 33}(1987) 53-58.
\bibitem{C} J.T. Cho, {\it A class of contact Riemannian manifolds whose associated CR-structures are integrable}, Publ.Math. Debrecen,  {\bf 63}(2003),  193-211.
\bibitem {DG} U. C. De and A. K. Gazion {\it $\phi$-recurrent $N(k)$-contact metric manifolds}, Math. J. Okayama. Univ. {\bf 50}(2008), 101-112.
\bibitem{DSB} U. C. De, A. A. Shaikh, S. Biswas, {\it On $\phi$-recurrent Sasakian manifolds}, Novi Sad.
J.Math. {\bf 33}(2003), 13-48.
\bibitem{De} R. Deszcz, {\it On pseudosymmetric spaces}, Acta. Math. Hung. {\bf 53}(1992), 185-190.
\bibitem{G} H. Geiges, {\it A brief history of contact geometry and topology}, Expos. Math. {\bf 19}(1) (2001), 25-53.
\bibitem{L} A. Lotta, {\it Non existence of homogenouse contact metric manifolds of nonpositive curvature}, Tohoku Math. J., {\bf 62} (2010), 575-578.
\bibitem{Se} A. Selberg, {\it Harmonic analysis and discontinuous groups in weakly
symmetric Riemannian spaces with applications to Dirichlet series}, Indian. Math. Soc. {\bf 20}(1956), 47-87.
\bibitem{Sz} Z.I. Szab\'{o}, {\it Structure Theorems on Riemannian Spaces Satisfying $R(X, Y).R=0$}, J. Diff. Geom. {\bf 17}(1982), 531-582.
\bibitem{T} T. Takahashi, {\it Sasakian $\phi$-symmetric spaces}, Tohoku Math. J., {\bf 29}(1977), 91-113.
\bibitem{TB} L. Tamassy and T.Q. Binh, {\it On weakly symmetric and weakly projective
symmetric Riemannian manifolds}, Coll. Math. Soc., J. Bolyani. {\bf 50}(1989), 663-670.
\bibitem{W} A.G. Walker, {\it On Ruses spaces of recurrent curvature}, London. Math. Society. {\bf 52}(1950), 36-64.


\end{thebibliography}
\end{document}